\documentclass[preprint,12pt]{elsarticle}


\usepackage{color} 
\RequirePackage{amsthm,amsmath, amssymb, enumerate, natbib}
\RequirePackage[colorlinks,citecolor=blue,urlcolor=blue]{hyperref}
\usepackage{float}
\usepackage{times}

\usepackage{graphicx} 

\newtheorem{proposition}{Proposition}[section]







\newcommand{\n}{^{(n)}}

\newcommand{\pr}{^{\prime}}
\newcommand{\ppr}{^{\prime\prime}}




\newcommand{\ut}[1] {\renewcommand{\arraystretch}{0.5}
\begin{array}[t]{c}{#1}\\
   \widetilde{}
\end{array}
\renewcommand{\arraystretch}{1}}







\begin{document}

\begin{frontmatter}



\title{\sc On  Hodges and Lehmann's  ``6/$\pi$ Result" }


\author{Marc Hallin$^{a}\!$, \fnref{a1}
Yvik Swan$^{b}\!$,\fnref{a2} 
 and Thomas Verdebout\fnref{a3}}

\address[a1]{ECARES, Universit\' e libre de Bruxelles 
 and ORFE, Princeton University}

\address[a2]{Universit\'e du Luxembourg}

\address[a3]{Universit\'e Lille Nord de Fance, laboratoire EQUIPPE
}

\begin{abstract}
 While the asymptotic relative efficiency (ARE) of Wilcoxon rank-based
 tests for location and regression with respect to their parametric
 Student competitors can be arbitrarily large, Hodges and 
Lehmann (1961) have shown that the ARE of the same Wilcoxon tests with
respect to their van der Waerden or normal-score counterparts is
bounded from above by $6/\pi\approx 1.910$. In this paper, we revisit
that result, and investigate similar bounds for statistics based on
Student scores. We also consider the serial version of this ARE. More
precisely, we study the ARE, under various densities,  of the
Spearman-Wald-Wolfowitz and Kendall rank-based autocorrelations  with
respect to the van der Waerden or normal-score ones  used to test
(ARMA) serial dependence alternatives.  

\end{abstract}

\begin{keyword}
Asymptotic relative efficiency,  rank-based tests, Wilcoxon test, van der Waerden test, Spearman autocorrelations, Kendall autocorrelations, linear serial rank statistics

\end{keyword}

\end{frontmatter}
 
\section{Introduction}\label{intro}
  
The Pitman asymptotic relative efficiency $\text{ARE}_f(\phi_{1} /
\phi_{2})$ under density $f$ of a test  $\phi_{1} $ with respect to a test $\phi_{2} $ is defined as the limit  (when it exists)   as $n_1$ tends to infinity  of the ratio 
 $n_{2;f}( n_1) / n_1$ of the number $n_{2;f}( n_1)$ of observations  it
 takes for the test $\phi_{2} $, under density $f$,  to match the
 local performance of the test $\phi_{1} $ based on~$n_1$
 observations. That concept  was first proposed by Pitman  in  the
 unpublished lecture notes~\cite{Pit49} he prepared for a~1948-49
 course at Columbia University. The first published rigorous treatment
 of the subject was by Noether~\cite{Noe} in 1955.  A similar
 definition applies to point estimation; see, for instance,
 \cite{H012} for a more precise
 definition.  An in-depth treatment 
 of the concept can be found in Chapter~10 of Serfling \cite{Serf},
 Chapter~14 of van der Vaart \cite{vdV}, or in the monograph by
 Nikitin~\cite{Niki}.

The study of the AREs of rank tests and R-estimators with respect to each other or with respect to their classical Gaussian counterparts   has produced  a number of interesting and sometimes   surprising results.   Considering  the van der Waerden or normal-score two-sample location rank test $\phi_{\rm vdW}$ and its classical normal-theory competitor, the two-sample Student test $\phi_{\mathcal{N}}$, 
Chernoff and Savage in 1958  established the rather  striking fact that, under any density~$f$ satisfying very mild regularity
assumptions, 
\begin{equation}\label{eq:1.3}
  \text{ARE}_f(\phi_{\rm vdW}/ \phi_{\mathcal{N}}) \ge 1,
\end{equation}
with equality holding  at the Gaussian density $f = \phi$ only. That result implies
that   rank  tests based on Gaussian scores (that is, the  two-sample   rank-based tests  for location, but also the    one-sample   signed-rank ones,   traditionally associated with the names of van der Waerden, Fraser, Fisher, Yates, Terry and/or Hoeffding---for simplicity, in the sequel, we uniformly  call them {\it van der Waerden tests}---asymptotically outperform the corresponding  everyday practice Student~$t$-test; see~\cite{CS58}. That result readily extends to one-sample symmetric and $m$-sample location, regression and analysis of variance models with independent noise.  

Another celebrated bound is the one obtained in 1956 by Hodges and Lehmann, who proved 
that, denoting by $\phi_{\rm W}$ the Wilcoxon test (same location and regression problems as above), 
\begin{equation}\label{eq:1.4}
  \text{ARE}_f(\phi_{\rm W}/ \phi_{\mathcal{N}}) \ge  0.864,
\end{equation}
which  implies that the   price to be paid for using   rank- or signed-rank tests of the Wilcoxon type (that is, logistic-score-based rank tests) instead of the traditional Student ones never exceeds 13.6\% of the total number of observations.  That bound moreover is sharp, being reached under the Epanechnikov density $f$. On the other hand, the benefits of considering Wilcoxon rather than Student can be arbitrarily large, as it is easily shown that the supremum over $f$ of $  \text{ARE}_f(\phi_{\rm W}/ \phi_{\mathcal{N}}) $ is infinite; see \cite{HL56}.

Both  
\eqref{eq:1.3} and \eqref{eq:1.4}    created quite  a surprise in the statistical community of  the late fifties, and helped  dispelling the wrong idea,  by then quite widespread,  that rank-based methods, although convenient and robust,  could not be expected to compete with the efficiency of  traditional parametric procedures. 

Chernoff-Savage and Hodges-Lehmann inequalities since then have been
extended  to a variety of more general settings. In the elliptical
context,  optimal rank-based procedures for location (one and
$m$-sample case), regression,   
 and scatter (one and
$m$-sample cases)  have been constructed in a series of papers by
Hallin and Paindaveine (\cite{HP02},  
\cite{HP06}, and \cite{HP08}), based on a multivariate concept of
signed ranks. The   Gaussian competitors here  are of the Hotelling,
Fisher,  or Lagrange
multiplier forms. For all those tests, Chernoff-Savage result similar
to \eqref{eq:1.3} have been established (see also
\cite{Pa04,Pa06}). Hodges-Lehmann results also have been obtained,
with bounds that, quite  interestingly, depend on the dimension of the
observation space: see~\cite{HP02}.

Another   type of extension is into the direction of time series and linear rank  statistics of the serial type.   Hallin \cite{H94} extended
Chernoff and Savage's result \eqref{eq:1.3}  to the serial context by
showing that the  serial van der Waerden  rank tests  
also uniformly dominate their  Gaussian competitors
(of the correlogram-based  portmanteau, Durbin-Watson or Lagrange
multiplier forms). Similarly,  
 Hallin and Tribel \cite{HaTr00}  proved  that the 0.864 upper bound in \eqref{eq:1.4}
no longer holds for the AREs of the   Wilcoxon  serial rank test 
with respect to their Gaussian competitors, and is to be replaced by  a slightly lower  0.854 one. Elliptical versions of those results are derived in  Hallin and Paindaveine (\cite{HP02'}, \cite{HP04}, \cite{HP05}).

Now, AREs  with respect to Gaussian procedures such as  $t$-tests are not always the best  evaluations of  the asymptotic performances of  rank-based tests. Their existence indeed requires  the Gaussian procedures  to be valid under the density~$f$ under consideration, a condition which places restrictions on $f$ that may not be satisfied. When the 
 Gaussian tests are no longer valid,  
 one rather may like  to consider   AREs~of the form 
\begin{equation}
  \label{eq:16}
  \text{ARE}_f(\phi_J / \phi_{K}) =1/ \text{ARE}_f(\phi_K / \phi_{J}) 
\end{equation}
 comparing the asymptotic performances (under $f$) of two rank-based tests $\phi_J $ and~$\phi_{K}$, based on score-generating  functions $J$ and $K$, respectively. 
Being distri\-bution-free,   rank-based procedures indeed do not impose any validity conditions on $f$, so that $ \text{ARE}_f(\phi_J / \phi_{K})$ 
in general  exists under much milder requirements on~$f$;  see, for instance,  \cite{HSVV11a} and~\cite{HSVV11b}, where AREs of the form (\ref{eq:16}) are provided for  rank-based methods  in linear
models with stable errors  under which  Student tests are not valid.

Obtaining bounds for  
  ARE$_f(\phi_J / \phi_{K})$,  in general, is not as easy as for AREs of the form
   ARE$_f(\phi_{J}/ \phi_{\mathcal{N}})$. The first result of that type  
 was established in 1961 by  Hodges and   Lehmann, who in \cite{HL61} show  that 
\begin{equation}
  \label{eqbis}
 0 \le   \text{ARE}_f(\phi_{\rm W} /\phi_{\rm vdW}) \leq  {6}/{\pi}\approx 1.910
\end{equation}
 or, equivalently,
\begin{equation}
  \label{eqter}
0.524 \approx \pi/6 \le   \text{ARE}_f(\phi_{\rm vdW} /\phi_{\rm W}) \leq  \infty 
\end{equation}
for all $f $ in some class $ \mathcal{F}$ of density functions
satisfying weak differentiability conditions.   Hodges and Lehmann moreover exhibit a parametric
 family of densities~${\cal F}_{\rm HL}=\{f_\alpha \vert\, \alpha\in [0,\infty)\}$ 
   for which the function~$\alpha \mapsto \text{ARE}_{f_{\alpha}}(\phi_{\rm W} /\phi_{\rm vdW})$ 
   achieves any value in the open interval $(0, {6}/{\pi})$ ($\alpha \mapsto \text{ARE}_{f_{\alpha}}(\phi_{\rm vdW} /\phi_{\rm W})$ 
   achieves any value in the open interval $(\pi/6, \infty)$). The  
 lower and upper    bounds  in (\ref{eqbis}) and~(\ref{eqter}) thus are  \textit{sharp} in the sense that they are the best possible ones.   The same result was extended and generalized by Gastwirth~\cite{G70}.

  Note that, in case $f$ has finite second-order moments (so that $ \text{ARE}_f(\phi_{\rm W} /\phi_{\cal N})$ is well defined),    since  
 $ \text{ARE}_f(\phi_{\rm vdW}/\phi_{\cal N}) = \text{ARE}_f(\phi_{\rm vdW}/\phi_{\rm W} )\times \text{ARE}_f(\phi_{\rm W} /\phi_{\cal N})
$, 
Hodges and   Lehmann's ``$6/\pi$ result"   implies that the ARE of the van der Waerden tests with respect to the Student ones, which by the Chernoff-Savage inequality is larger than or equal to one, actually  can be   arbitrarily large, and that this happens for the same types of densities as for the Wilcoxon tests. This is an indication that, when Wilcoxon is quite significantly outperforming Student, that performance is shared by a broad class of rank-based tests and $R$-estimators, which includes the van der Waerden ones.

In Section~\ref{sec:asympt-relat-effic}, we successively consider   the traditional case of  \textit{nonserial} rank statistics used in the context of location and regression models with independent observations,  and the case of  {\it serial} rank  statistics; the latter  involve ranks at time~$t$ and~$t-k$, say, and aim at detecting serial dependence among the observations. Serial rank statistics typically involve two score functions and, instead of  (\ref{eq:16}), yield AREs of the form
\begin{equation}\label{eq:16ser}
 \text{ARE}^*_f(\phi_{J_1,J_2}/\phi_{J_3, J_4}).
\end{equation}

 To start with, in Section~\ref{location},  we   revisit Gastwirth's classical    nonserial results. More precisely, we provide (Proposition~\ref{bounds}) a slightly different proof of the main proposition  in~\cite{G70}, with some further illustrations in the case of  Student scores. In Section~\ref{sercase}, we turn  to the serial case,  with special attention for the so-called Wilcoxon-Wald-Wolfowitz, Kendall and van der Waerden rank autocorrelation  coefficients. Serial AREs of the form~(\ref{eq:16ser}) typically are the product of two factors to which  the nonserial techniques of Section~\ref{location} separately apply; this provides  bounds which, however, are not sharp. Therefore, in Section~\ref{sec:numer-cons}, we restrict to a few parametric families---the Student family (indexed by the degrees of freedom), the power-exponential family, or the Hodges-Lehmann family ${\cal F}_{\rm HL}$---for which numerical values are displayed.

\section{Asymptotic relative efficiencies of  rank-based procedures}
\label{sec:asympt-relat-effic}

\setcounter{equation}{0}

The asymptotic behavior of rank-based test
statistics under local alternatives, since H\' ajek and \v Sid\' ak
\cite{HvS}, is   obtained via an application of Le~Cam's
Third Lemma (see, for instance,  Chapter 13 of~\cite{vdV}).  Whether the statistic is of the serial or the nonserial type, the result, under a density $f$ with distribution  function~$F$  involves integrals of the form 
$$\mathcal{K}(J) :=
\int_{0}^1J^2(u){\rm d}u\  ,\qquad \mathcal{K}(J, f)
:=\int_{0}^1 J(u)\varphi_{f}(F^{-1}(u)){\rm d}u 
,$$ and, in the serial case, $$
\mathcal{J}(J, f)
:=\int_{0}^1 J(u)F^{-1}(u){\rm d}u
$$
where, assuming that $f$ admits a
weak derivative~$f\pr$,  $\varphi_f:=  -  f\pr /f$ is such that  the 
Fisher information for location ${\cal 
  I}(f):=\int_0^1\varphi_f^2(F^{-1}(u)) {\rm d}u$ is finite. Denote by~${\cal F}$ the class of such densities. If local alternatives, in the serial case,  are of the ARMA type, $f$ is further restricted to   the subset ${\cal F}_2$ of densities $f\in{\cal F}$ having finite second-order moments.   Differentiability in quadratic mean of
$f^{1/2}$ is the standard assumption here, see Chapter 7 of
  \cite{vdV}; but absolute continuity of $f$ in the traditional
sense, with    a.e. derivative $f\pr$, is sufficient  for most purposes. We refer to  \cite{HvS} and   \cite{HP94}  for details in the nonserial and the serial case, respectively.

\subsection{The nonserial case} \label{location}
In location or regression problems, or, more generally, when testing linear constraints on the parameters of a linear model  (this includes ANOVA etc.),  the 
 ARE, under  density $f\in{\cal F}$,  of a
rank-based test $\phi_{J_1}$ based on the square-summable score-generating function $J_1$
with respect to another rank-based test $\phi_{{J_2}}$ based on
the square-summable  score-generating function ${J_2}$  takes the form 
\begin{equation}\label{AREf}  {\rm
    \text{ARE}}_{f}\left(\phi_{J_1}/\phi_{J_2}\right) =    \frac{\mathcal{K}({J}_2)}{\mathcal{K}({J}_1)}C^2_f(J_1, J_2),\quad\text{with}\quad C_f(J_1, J_2):=
  \frac{\mathcal{K}(J_1, f)}{ \mathcal{K}(J_2, f)}, \end{equation} 
provided that   $J_1$ and $J_2$ are monotone, or the difference between two monotone functions. Those ARE values readily extend to the $m$-sample setting, and to  R-estimation  problems. In a time-series context with innovation density $f\in{\cal F}_2$,  and under slightly more restrictive assumptions on the scores, they also extend to the partly rank-based tests and   R-estimators considered by Koul and Saleh in  \cite{KS93} and \cite{KS95}. 

 Gastwirth (1970) is basing his analysis of (\ref{AREf}) on an integration by parts of the integral in the definition of ${ \mathcal{K}(J, f)}$.  If both $J_1$ and $J_2$ are differentiable, with derivatives $J_1\pr$ and $J_2\pr$, respectively, and provided that $f$ is such that 
$$\lim_{x\rightarrow\infty}J_1(F(x))f(x) =0= \lim_{x\rightarrow\infty}J_2(F(x))f(x),
$$
integration by parts in those   integrals 
 yields, for  \eqref{AREf}, 
\begin{align}  {\rm
    \text{ARE}}_{f}\left(\phi_{J_1}/\phi_{J_2}\right) 
=  \frac{\mathcal{K}({J}_2)}{\mathcal{K}({J}_1)} 
  \left(\frac{ \int_{-\infty}^\infty  J_1\pr(F(x)) f^2(x){\rm d}x}{ \int_{-\infty}^\infty
      J_2\pr(F(x)) f^2(x){\rm d}x }\right)^{2}.\label{AREf3}  \end{align}

   In view of the Chernoff-Savage result \eqref{eq:1.3}, the   van~der~Waerden  
  score-genera\-ting function
  \begin{equation}
    \label{eq:3}
    J_2(u) = J_{\rm vdW}(u) = \Phi^{-1}(u)
  \end{equation}
  (with $u \mapsto \Phi^{-1}(u)$ the standard normal  quantile function)
may appear as a natural benchmark for ARE computations. From a technical point of view, under this integration by parts approach, the Wilcoxon score-generating function
  \begin{equation}\label{scWil}  
  J_2(u) = J_{\rm W}(u) = u-1/2
  \end{equation}
 (the Spearman-Wald-Wolfowitz  score-generating function in the serial case) is more appropriate, though. Convexity arguments indeed will play an important role, and, being linear, $J_{\rm W}$ is both convex and concave.   
Since  $J_{\mathrm{W}}\pr(u) = 1$ and~$\mathcal{K}(J_{\mathrm{W}})=1/12$,  
    equation  \eqref{AREf3} yields
\begin{align} {\rm 12\, \text{ARE}}_{f}\left(\phi_{J_1}/\phi_{{\rm
        W}}\right) 
  = \frac{1}{\mathcal{K}({J}_1)} \left(\frac{ \int_{-\infty}^\infty
      J_1\pr(F(x)) f^2(x){\rm d}x}{ \int_{-\infty}^\infty f^2(x){\rm
        d}x }\right)^{2}. 
        \label{AREff3} \end{align}
   Bounds  on $J_1\pr(F(x))$ then readily yield bounds on AREs, irrespective of $f$. 
 
 That property of Wilcoxon scores is exploited in   Propositions \ref{bounds} and \ref{bounds'} for non-serial AREs, in Propositions \ref {serbounds}  for  the serial   ones; those bounds are mainly about AREs of, or with respect to, Wilcoxon (Spearman-Wald-Wolfowitz) procedures, but not exclusively so.

Assume that $f\in{\cal F}_0:=\{f\in{\cal F}\vert \ \lim_{x\to\pm\infty}f(x) = 0\}$. Then, integration by parts is possible in the definition of ${\cal K}(J_{\rm W},f)$, yielding 
$${\cal K}(J_{\rm W},f)=\int_{-\infty}^\infty
  f^2(x) {\rm d}x.$$
  Assume furthermore that the square-integrable score-generating function $J_1$ (the difference of two monotone increasing functions) is differentiable, with derivative~$J_1\pr$, and that 
  $$f\in{\cal F}_{J_1}:=\{f\in{\cal F}_0\vert \ \lim_{x\to\pm\infty}J_1(F(x))f(x) = 0\},
  $$
so that (\ref{AREf3}) holds. Finally, assume that   $J_1$ is  
    skew-symmetric about $1/2$.  
Defining the (possibly infinite) constants $$\kappa^{+}_J := \sup_{u
   \ge 1/2} \left| J\pr(u)  \right| \quad \mbox{ and }\quad
\kappa^{-}_J := \inf_{u \ge 1/2} \left| J\pr(u) \right|,$$ 
we
can always write  
\begin{equation}
  \label{eq:2}
  12 \,  \text{ARE}_{f}\left(\phi_{J_1}/\phi_{{\rm W}}\right)  \le
(\kappa^{+}_{J_1})^2/ \mathcal{K}(J_1)
\end{equation}
while, if  $J_1$ is non-decreasing (hence $J_1\pr$ is non-negative),  we further  have 
\begin{equation}
  \label{eq:1}
(\kappa^{-}_{J_1})^2/ \mathcal{K}(J_1) \le   12 \,  \text{ARE}_{f}\left(\phi_{J_1}/\phi_{{\rm W}}\right)  \le
(\kappa^{+}_{J_1})^2/ \mathcal{K}(J_1).
\end{equation}
The  quantities appearing in  \eqref{eq:2} and \eqref{eq:1} often   can be
computed explicitly,  yielding   
ARE bounds which are, moreover, sharp under certain conditions (see below). 

For example, if $J_1$ is convex on $[1/2, 1)$, 
 its  derivative $J_1\pr$ is non-decreasing
over~$[1/2,1)$, so that 
\begin{equation}\label{kappapm}
\kappa^{-}_{J_1}  = J_1\pr(1/2)\geq 0 \quad \text{ and }\quad
\kappa^{+}_{J_1}  =\lim_{u\to1}J_1\pr(u)\le +\infty .
\end{equation}  
 It follows  that,  under the  assumptions made, 
\begin{equation}
  \label{eq:convex}
(J_1\pr(1/2))^2/\mathcal{K}(J_1) \le   12\,  \text{ARE}_{f}\left(\phi_{J_1}/\phi_{{\rm W}}\right)
\le    (\lim_{u\to1}J_1\pr(u))^2/\mathcal{K}(J_1).
\end{equation}
The lower bound in \eqref{eq:convex} is established in  Theorem
  2.1 of  \cite{G70}. 
  
  The double inequality~(\ref{eq:convex}) holds, for instance (still, under $f\in{\cal F}_{J_1}$), when
the scores $J_1 = \varphi_g\circ G^{-1}$ are the optimal scores
associated with some symmetric and  {\it strongly unimodal} density~$g$ with distribution function $G$; such densities indeed are log-concave and have monotone increasing,  convex over $[1/2, 1)$ score functions.  
  Symmetric log-concave densities take the form  
\begin{equation}
  \label{eq:1+}
  g(x) = K e^{-\mu(x)}, \qquad K^{-1}=\int_{-\infty}^\infty e^{-\mu(x)}{\rm d}x
\end{equation}
with  
 $x \mapsto \mu(x)$ a convex, even  (that is,
$\mu(x) = \mu(-x)$) function; assume it to be twice differentiable,  with derivatives $\mu\pr$ and $\mu\ppr$.  
Then, 
$  \varphi_g(x) = \mu\pr(x)$, so that 
$$  J_1(u) := \varphi_g(G^{-1}(u))= \mu\pr(G^{-1}(u)), \quad \quad\!\!\!  \mathcal{K}(J_1) \! =\!\! \int 
_{-\infty}^\infty \big(\mu\pr(x)\big)^2g(x){\rm d}x= \mathcal{I}(g)$$
where $\mathcal{I}(g)$ the Fisher information of $g$ (which we assume to be finite), and 
$$  J_1\pr(u) =  {{\mu\ppr}(G^{-1}(u))}/{g(G^{-1}(u))}, \quad\text{hence}\quad  J_1\pr(1/2) = \frac{{\mu\ppr}(0)}{g(0)} = \frac{{\mu\ppr}(0)}{K}.$$
Specializing   \eqref{eq:convex} to this situation, we obtain 
the following proposition. 
\begin{proposition}
  If  the square-integrable score-generating function $J_1$ is of the form $ \varphi_g\circ G^{-1}$ with $g$ given by \eqref{eq:1+}, $\mu$ even, convex, and  twice differentiable,   then,   under any $f\in{\cal F}_{J_1}$,   
\begin{equation}
  \label{eq:7}
 \left( \frac{{\mu\ppr}(0)}{K}\right)^2  \le 12\,  \mathcal{I}(g) ARE_f
  (\phi_{J_1}/\phi_{\rm W}) \le  ( \lim_{u\to1}J_1\pr(u))^2  =  ({\lim_{x\to \infty}({\mu\ppr}(x)}/ g(x))^2.
\end{equation}
\end{proposition}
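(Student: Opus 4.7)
The plan is to specialize the double inequality \eqref{eq:convex} directly to the optimal-score function $J_1 = \mu' \circ G^{-1}$, reading off the bounds from the chain-rule identities already recorded in the paragraph preceding the statement: $\mathcal{K}(J_1) = \mathcal{I}(g)$, $J_1'(u) = \mu''(G^{-1}(u))/g(G^{-1}(u))$, and $J_1'(1/2) = \mu''(0)/K$.

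First I would verify that $J_1$ satisfies the structural hypotheses needed to apply \eqref{eq:convex}. Square-integrability is equivalent to $\mathcal{I}(g) < \infty$, which is assumed. Skew-symmetry about $1/2$ combines two observations: $\mu$ even forces $\mu'$ odd, while symmetry of $g$ gives $G^{-1}(1-u) = -G^{-1}(u)$; together these yield $J_1(1-u) = \mu'(-G^{-1}(u)) = -J_1(u)$. Monotonicity of $J_1$ follows from convexity of $\mu$ (so $\mu'$ is non-decreasing on $\R$) composed with the non-decreasing map $G^{-1}$. The vanishing-at-infinity condition $\lim_{x \to \pm\infty} J_1(F(x))f(x) = 0$ is built into the assumption $f \in \mathcal{F}_{J_1}$, so \eqref{AREff3} is available.

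Next I would invoke the convexity of $J_1$ on $[1/2, 1)$, already asserted in the discussion preceding the statement as the standard property of optimal scores for a strongly unimodal $g$. Concretely, after the substitution $x = G^{-1}(u) \ge 0$, convexity of $J_1$ reduces to monotonicity on $[0, \infty)$ of $x \mapsto K^{-1} e^{\mu(x)} \mu''(x)$. With this in place, \eqref{eq:convex} applies verbatim, and substituting $\mathcal{K}(J_1) = \mathcal{I}(g)$, $J_1'(1/2) = \mu''(0)/g(0) = \mu''(0)/K$, and $\lim_{u \to 1} J_1'(u) = \lim_{x \to \infty} \mu''(x)/g(x)$ (the last two obtained from the chain-rule formula for $J_1'$ together with $g(0) = K$) immediately produces \eqref{eq:7}.

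The main obstacle is the convexity of $J_1$ on $[1/2, 1)$: unlike skew-symmetry and monotonicity, it is not a transparent consequence of the convexity of $\mu$ alone, since the relevant condition $(e^{\mu}\mu'')' \ge 0$ on $[0,\infty)$ is genuinely stronger than $\mu'' \ge 0$. In the proof, this would either be cited from the folklore on strongly unimodal densities or enforced by a mild additional regularity requirement on $\mu$; everything else is a direct substitution into the previously established inequality.
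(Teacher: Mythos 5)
Your proposal follows the paper's route exactly: the proposition is obtained by direct substitution of $\mathcal{K}(J_1)=\mathcal{I}(g)$, $J_1\pr(1/2)=\mu\ppr(0)/K$ and $\lim_{u\to1}J_1\pr(u)=\lim_{x\to\infty}\mu\ppr(x)/g(x)$ into \eqref{eq:convex}, after checking skew-symmetry, monotonicity and the membership $f\in{\cal F}_{J_1}$, just as you do. The one step you flag as an obstacle---convexity of $J_1$ on $[1/2,1)$---is precisely the step the paper disposes of by assertion (``such densities \dots have monotone increasing, convex over $[1/2,1)$ score functions''), and your hesitation is well founded: convexity of $\mu$ gives $\mu\ppr\ge 0$ but not the required monotonicity of $x\mapsto \mu\ppr(x)e^{\mu(x)}$ on $[0,\infty)$. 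For instance $\mu(x)=\log\cosh x$ (the hyperbolic secant density, which is even, convex and smooth) yields $J_1\pr(u)=\pi\,\mathrm{sech}(G^{-1}(u))$, which is \emph{decreasing} on $[1/2,1)$, so that $J_1$ is concave there and the two sides of \eqref{eq:7}, read literally, would give $\pi^2\le 0$. So the additional regularity requirement on $\mu$ that you mention is genuinely needed rather than optional; with it granted, the rest of your argument is a faithful reproduction of the paper's derivation.
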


With $\mu(x)= x^2/2$  (so that $K^{-1} = \sqrt{2\pi}$) in (\ref{eq:1+}), $g$ is the standard Gaussian density;  $\mu\ppr(0) = 1$, $\mathcal{I}(g)=1$, and the lower bound in (\ref{eq:7}
) becomes $(\mu\ppr(0)/K)^2 = 2\pi$, whereas the upper bound is trivially infinite. This yields the Hodges-Lehmann result (\ref{eqbis}). \vspace{2mm}

Turning back to \eqref{eq:2} and \eqref{eq:1}, but with  $J_1$   concave (and still non-decreasing) on $[1/2,
1)$,    $J_1\pr$ is nonincreasing, so that $\kappa^+_{J_1}   = J_1\pr(1/2)$ and
\begin{equation}
  \label{eq:concare}
 12\,  \text{ARE}_{f}\left(\phi_{J_1}/\phi_{{\rm W}}\right)
\le   ( J_1\pr(1/2))^2/\mathcal{K}(J_1).
\end{equation}
Not  much can be said on the lower bound, though, without
  further assumptions on the behavior of $J_1$ around $u=1$.

Replacing, for various score-generating functions $J_1$ and densities
$f$, the quantities appearing in (\ref{eq:2}), (\ref{eq:convex}) or
(\ref{eq:concare}) with their explicit values provides a variety of
bounds of the Hodges-Lehmann type. Below, we
consider  the van der Waerden tests $\phi_{{\rm vdW}}$,   
based on the
score-generating function  \eqref{eq:3}
 and the Cauchy-score rank  tests  $\phi_{{\rm Cauchy}}$, based on the
score-generating function 
\begin{equation}\label{scCau}
J_{\rm Cauchy}(u)=\sin (2 \pi (u- 
 {1}/{2})). 
  \end{equation}
  
\begin{figure}[htbp]
\begin{center}
\includegraphics[height=118mm, width=\linewidth]
{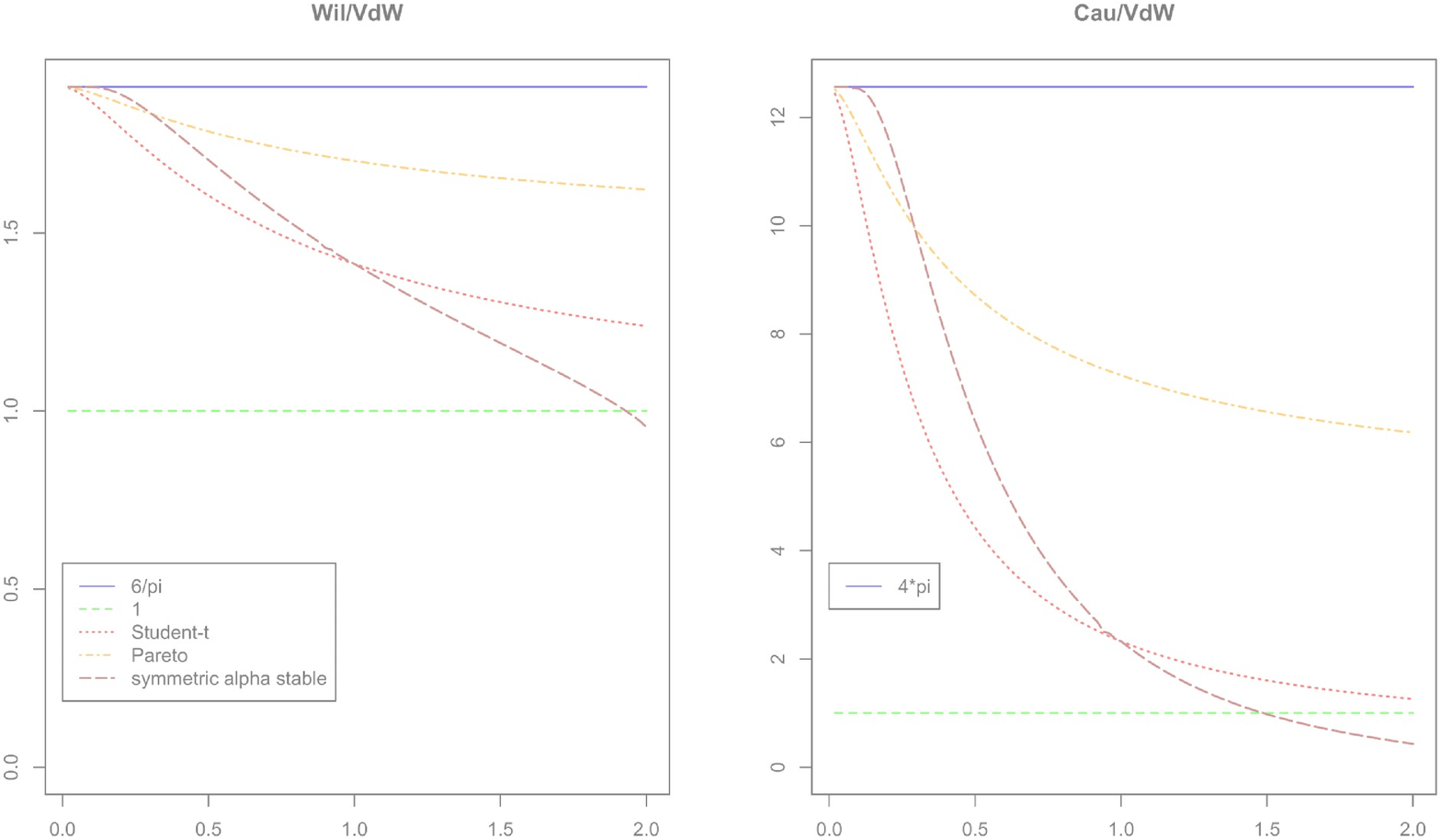}
\caption{\small ${\rm \text{ARE}}_f(\phi_{{\rm W}}/\phi_{{\rm vdW}})$ and ${\rm \text{ARE}}_f(\phi_{{\rm Cauchy}}/\phi_{{\rm vdW}})$ under various families of densities:  symmetric stable (indexed by their tail parameter $\alpha$), Student-$t$ (indexed by their degrees of freedom $\nu$) or Pareto (indexed by their shape parameter $\alpha$). 
}
\label{distD}
\end{center}
\end{figure}

\begin{proposition} \label{bounds}
 For all symmetric  
  densities $f$ in ${\cal F}_{{\rm vdW}}$,  ${\cal F}_{{\rm Cauchy}}$ and ${\cal F}_{{\rm vdW}}\bigcap {\cal F}_{{\rm Cauchy}}$, respectively, 
  \begin{itemize}
  \item[(i)]  ${\rm \text{ARE}}_f(\phi_{{\rm W}}/\phi_{{\rm vdW}}) \leq  {6}/{\pi}$;
  \item[(ii)] ${\rm \text{ARE}}_f(\phi_{{\rm Cauchy}}/\phi_{{\rm W}}) \leq   {2\pi^2}/{3}$;
  \item[(iii)] ${\rm \text{ARE}}_f(\phi_{{\rm Cauchy}}/\phi_{{\rm vdW}}) \leq 4 \pi$.
  \end{itemize} 
 \end{proposition}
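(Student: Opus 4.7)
The plan is to obtain each of the three bounds by plugging the relevant score-generating functions into the generic inequalities \eqref{eq:2}, \eqref{eq:convex}, and \eqref{eq:7} derived just above, and then chaining the first two bounds to obtain the third.

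For part (i), I would simply specialize the proposition that precedes the statement (equation \eqref{eq:7}) to the Gaussian density $g=\phi$. With $\mu(x)=x^2/2$ one has $K^{-1}=\sqrt{2\pi}$, $\mu\ppr(0)=1$, and $\mathcal{I}(\phi)=1$, so the lower bound in \eqref{eq:7} reads $2\pi \le 12\,{\rm ARE}_f(\phi_{\rm vdW}/\phi_{\rm W})$ for every $f\in\mathcal{F}_{{\rm vdW}}$. Inverting (using \eqref{eq:16}) yields ${\rm ARE}_f(\phi_{\rm W}/\phi_{\rm vdW})\le 6/\pi$. This step is essentially the remark already made by the authors right after \eqref{eq:7}, so there is nothing new to do here beyond packaging it.

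For part (ii), I would verify that $J_{\rm Cauchy}(u)=\sin(2\pi(u-1/2))$ is differentiable and skew-symmetric about $1/2$, which places it in the setting of \eqref{eq:2}. Then I would compute $J_{\rm Cauchy}\pr(u)=2\pi\cos(2\pi(u-1/2))$, whence
\[
\kappa^{+}_{J_{\rm Cauchy}}=\sup_{u\ge 1/2}|2\pi\cos(2\pi(u-1/2))|=2\pi,
\qquad
\mathcal{K}(J_{\rm Cauchy})=\int_0^1\sin^2(2\pi(u-1/2))\,{\rm d}u=\tfrac12 .
\]
Substituting into \eqref{eq:2} gives $12\,{\rm ARE}_f(\phi_{\rm Cauchy}/\phi_{\rm W})\le (2\pi)^2/(1/2)=8\pi^2$, i.e. the claimed $2\pi^2/3$ bound. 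One could equivalently invoke \eqref{eq:concare}, since $J_{\rm Cauchy}$ is concave on $[1/2,1)$ and the supremum of $|J\pr|$ on $[1/2,1)$ is attained at $u=1/2$, giving the same numerical value; I would briefly mention this alternative as a sanity check.

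For part (iii), the factorization
\[
{\rm ARE}_f(\phi_{\rm Cauchy}/\phi_{\rm vdW})
={\rm ARE}_f(\phi_{\rm Cauchy}/\phi_{\rm W})\times {\rm ARE}_f(\phi_{\rm W}/\phi_{\rm vdW})
\]
is valid whenever $f\in\mathcal{F}_{{\rm vdW}}\cap\mathcal{F}_{{\rm Cauchy}}$, since then all three AREs exist in the form \eqref{AREf}; combining (i) and (ii) delivers the upper bound $(2\pi^2/3)(6/\pi)=4\pi$.

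The only genuinely delicate point, and what I anticipate as the main obstacle, is bookkeeping of the regularity assumptions: one must check that the integration-by-parts step used to pass from \eqref{AREf} to \eqref{AREff3} is legitimate for each score function considered. This amounts precisely to verifying the boundary conditions $\lim_{x\to\pm\infty}J_i(F(x))f(x)=0$ built into the definitions of $\mathcal{F}_{\rm vdW}$ and $\mathcal{F}_{\rm Cauchy}$, which in turn is why the statement is phrased relative to these density classes. Once this is acknowledged, the three inequalities follow by direct computation as above.
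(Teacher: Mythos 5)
Your proposal is correct and follows essentially the same route as the paper: for (i) the paper plugs $\kappa^-_{\rm vdW}=\sqrt{2\pi}$ into the left-hand inequality of \eqref{eq:convex} and explicitly notes that applying \eqref{eq:7} to the Gaussian (your choice) is an equivalent alternative; for (ii) it uses exactly your computation $\kappa^+_{J_{\rm Cauchy}}=2\pi$, $\mathcal{K}(J_{\rm Cauchy})=1/2$ in \eqref{eq:2}; and (iii) is obtained, as you do, by multiplying the bounds in (i) and (ii). The only minor caveat is your side remark invoking \eqref{eq:concare}: that inequality is derived under the additional hypothesis that $J_1$ is non-decreasing, which the Cauchy score is not on $[1/2,1)$, so the direct appeal to \eqref{eq:2} you use as the main argument is the right one.
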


\begin{proof}
The van der Waerden score \eqref{eq:3} is strictly increasing, and  convex  over $[1/2,1)$. One readily obtains
$$\mathcal{K}(J_{\rm vdW})=1\quad\text{ and }\quad  J\pr_{\rm vdW}(u)=
\sqrt{2\pi}\exp \{(\Phi^{-1}(u))^2/2\}, 
$$  
hence $ \kappa^-_{{\rm vdW}}  =  J\pr_{\rm vdW}(1/2)= \sqrt{2\pi}$.  
Plugging this into the left-hand side inequality of  \eqref{eq:convex}
yields   (i).   Alternatively one can directly apply \eqref{eq:7}.

The Cauchy
score is  concave over $[1/2, 1)$, but not monotone  (being of bounded variation, however, it is the difference of two monotone function). Direct inspection of  \eqref{scCau} nevertheless reveals that 
$$\mathcal{K}(J_{{\rm Cauchy}})=  1/2 \quad\text{ and }\quad J\pr_{{\rm Cauchy}}(u)= 2\pi\cos (2\pi(u-1/2)),$$
hence   $  \kappa^+_{{{\rm Cauchy}}} = J\pr_{{\rm Cauchy}}(1/2)= {2\pi}$. Substituting this in (\ref{eq:2}) yields (ii). The product of the upper bounds in (i) and (ii) yields (iii). 
\end{proof}

  Remarkably,   those three bounds are sharp.  Indeed,  
 numerical evaluation shows that they can be approached
arbitrarily well by taking extremely heavy-tails  such as those of
stable densities $f_\alpha$ with tail index~$\alpha\to 0$,  Student densities
with degrees of freedom~$\nu\to 0$, or Pareto densities with
$\alpha\to 0$; see  also the family ${\cal F}_{\rm HL}$ of densities $f_{a, \epsilon}(x)$Ê defined inÊ equation (\ref{eq:2d}).  

Figure \ref{distD} provides plots of   ${\rm \text{ARE}}_f(\phi_{{\rm
    W}}/\phi_{{\rm vdW}})$ and ${\rm \text{ARE}}_f(\phi_{{\rm
    Cauchy}}/\phi_{{\rm vdW}})$  for  various densities.   Inspection of
those graphs shows that both AREs are decreasing as the tails become
lighter; the sharpness of bounds (i) and (iii), hence also that of
bound (ii), is graphically confirmed.

The bounds proposed in Proposition~\ref{bounds} are not new, and  
 have been obtained  already 
 in \cite{G70}. One would like to see similar bounds for other score functions, such as the  Student  ones
\begin{eqnarray} J_{t_\nu}(u)
&=& {(\nu +1)
  F^{-1}_{t_\nu}(u)}/({\nu + F^{-1}_{t_\nu}(u)^2)} \qquad\qquad\quad   0 < u < 1\nonumber \\ 
  &=& 
\frac{1+\nu }{\sqrt{\nu}}
\sqrt{-1+\frac{1}{\text{IB}_\nu(1-2u)}}\text{IB}_\nu(1-2u)\  \  1/2\leq u < 1
\label{Studscore}\end{eqnarray}
 where $\mbox{IB}_\nu (v)$ denotes  the   inverse of the regularized
incomplete beta function evaluated at $(1, v, \nu/2, 1/2)$ and  $F^{-1}_{t_\nu}$ stands for the Student 
quantile function with $\nu$ degrees of freedom. Note that $\lim_{v\to -1}\mbox{IB}_\nu (v) = 0$, so that $\lim_{u\to 1}J_{t_\nu}(u)=0$. Since $J_{t_\nu}(1/2)=0$ and $J\pr _{t_\nu}(1/2)>0$, this means that, on $[1/2, 1)$,  $J_{t_\nu}$ is a redescending function; in general, it is neither convex nor concave on $[1/2,1)$.

Differentiating (\ref{Studscore}), we get, for $u \ge 1/2$,  
\begin{equation}\label{Yvik}
J_{t_{\nu}}'(u) =   \frac{\sqrt{\pi } (\nu +1 )
  \Gamma\left(\frac{\nu }{2}\right)}{\sqrt{\nu }
  \Gamma\left(\frac{\nu +1 }{2}\right)}  
 \left(-1+2 
\text{IB}_\nu(1-2u)\right)\text{IB}_\nu(1-2u)^{\frac{1-\nu}{2}},
\end{equation}
from which we deduce that 
$$\lim_{u\to 1}J\pr_{t_\nu}(u)=\left\{
\begin{array}{rc} 0& \quad 0<\nu <1\\ -2\pi& \quad \nu = 1 \\ -\infty& \quad 1<\nu\phantom{<1} .
\end{array}\right.
$$
Except for the $\nu=1$ case, which   is covered by (ii) and  (iii) in Proposition~\ref{bounds}, these values do not provide exploitable values for $\kappa^+$. 
For $\nu<1$, however, one can check from (\ref{Yvik})   that 
$\max_{u\geq 1/2} \vert J\pr(x)\vert = J\pr(1/2)$, so that 
$$\kappa^+_{J_{t_\nu}}
=- {\sqrt{\pi}(\nu +1) \Gamma\left(\frac{\nu}{2}\right)}{\Big /}{\sqrt{\nu}\,\Gamma\left(\frac{\nu +1}{2}\right)}.$$ 
Elementary though somewhat tedious algebra yields  $$\mathcal{K}(J_{{t_\nu}})=  (\nu +1)/(\nu + 3).$$
Plugging this into (\ref{eq:2}), we obtain, for~$\nu \leq1$, the following additional bounds. 
\begin{proposition} \label{bounds'}
 For all  $0<\nu\le1$ and all symmetric density $f$ in ${\cal F}_{J_{t_\nu}}$ and ${\cal F}_{J_{t_\nu}}\bigcap {\cal F}_{J_{{{\rm vdW}}}}$, respectively,   
  \begin{itemize}
\item[(iv)] ${\rm \text{ARE}}_f(\phi_{t_\nu}/\phi_{{\rm W}}) \leq  {\pi
      \Gamma^2(\frac{\nu}{2})(\nu +3)(\nu+1)}/{12 \nu \Gamma^2(\frac{\nu +1}{2})}$, and 
\item[(v)] ${\rm \text{ARE}}_f(\phi_{t_\nu}/\phi_{{\rm vdW}}) \leq {\Gamma^2(\frac{\nu}{2})(\nu +3)(\nu+1)}/{2 \nu \Gamma^2(\frac{\nu +1}{2})}$. 
\end{itemize}
\end{proposition}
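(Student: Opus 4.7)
The plan is to derive (iv) by a direct application of the upper bound (\ref{eq:2}) to the Student score $J_{t_\nu}$, and to deduce (v) from (iv) by means of the multiplicativity identity $\text{ARE}_f(\phi_{t_\nu}/\phi_{\rm vdW})=\text{ARE}_f(\phi_{t_\nu}/\phi_{\rm W})\cdot\text{ARE}_f(\phi_{\rm W}/\phi_{\rm vdW})$, combined with the $6/\pi$ bound already established in Proposition~\ref{bounds}(i); this identity is valid whenever $f$ lies in the intersection $\mathcal{F}_{J_{t_\nu}}\cap\mathcal{F}_{\rm vdW}$, which is exactly the assumption of part~(v).

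Before invoking (\ref{eq:2}), I would verify that $J_{t_\nu}$ fits its hypotheses. Since $t_\nu$ is symmetric, $J_{t_\nu}=\varphi_{t_\nu}\circ F_{t_\nu}^{-1}$ is skew-symmetric about~$1/2$ and differentiable on $(0,1)$; being redescending on $[1/2,1)$ with $J_{t_\nu}(1/2)=\lim_{u\to1}J_{t_\nu}(u)=0$ and a single interior maximum, it is the difference of two monotone functions, and the tail condition $J_{t_\nu}(F(x))f(x)\to 0$ is built into the definition of $\mathcal{F}_{J_{t_\nu}}$. The norming constant $\mathcal{K}(J_{t_\nu})=\int_0^1 J_{t_\nu}^2(u)\,\mathrm{d}u$ coincides with the Student Fisher information for location $\mathcal{I}(t_\nu)$ after the substitution $x=F_{t_\nu}^{-1}(u)$, and a routine Beta-function evaluation produces the value $(\nu+1)/(\nu+3)$ quoted in the text (a quick sanity check: $\nu=1$ gives $1/2$, the Cauchy information, and $\nu\to\infty$ gives the Gaussian value $1$).

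The delicate point is the evaluation of $\kappa^+_{J_{t_\nu}}=\sup_{u\ge 1/2}|J'_{t_\nu}(u)|$. Setting $w:=\text{IB}_\nu(1-2u)$, the closed-form expression (\ref{Yvik}) reads $|J'_{t_\nu}(u)|=C_\nu\,|1-2w|\,w^{(1-\nu)/2}$ with $C_\nu>0$. Since $w$ varies monotonically with $u$ on $[1/2,1)$, the supremum reduces to a one-variable optimisation in $w$. I expect this to be the main obstacle: one must check, by differentiating the function $w\mapsto|1-2w|\,w^{(1-\nu)/2}$ and using precisely $0<\nu\le 1$, that its critical points are dominated by the boundary value corresponding to $u=1/2$, yielding $\kappa^+_{J_{t_\nu}}=\sqrt{\pi}\,(\nu+1)\,\Gamma(\nu/2)/(\sqrt{\nu}\,\Gamma((\nu+1)/2))$. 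The limiting values $\lim_{u\to 1}J'_{t_\nu}(u)$ recorded just before the proposition serve as a cross-check and explain why the range $\nu\le 1$ is needed: for $\nu>1$ the derivative blows up as $u\to 1$, so $\kappa^+_{J_{t_\nu}}=+\infty$ and the bound (\ref{eq:2}) becomes vacuous.

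Substituting these values of $\kappa^+_{J_{t_\nu}}$ and $\mathcal{K}(J_{t_\nu})$ into (\ref{eq:2}) gives $12\,\text{ARE}_f(\phi_{t_\nu}/\phi_{\rm W})\le\pi\,\Gamma^2(\nu/2)(\nu+3)(\nu+1)/(\nu\,\Gamma^2((\nu+1)/2))$, which is exactly (iv); multiplying by the $6/\pi$ bound of Proposition~\ref{bounds}(i) cancels the factor $\pi/6$ and produces the estimate (v).
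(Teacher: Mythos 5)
Your proposal is correct and follows essentially the same route as the paper: the text preceding the proposition computes $\kappa^+_{J_{t_\nu}}=\sqrt{\pi}(\nu+1)\Gamma(\nu/2)/(\sqrt{\nu}\,\Gamma((\nu+1)/2))$ from (\ref{Yvik}) (checking that for $0<\nu\le 1$ the supremum of $|J_{t_\nu}'|$ over $[1/2,1)$ is attained at $u=1/2$) and $\mathcal{K}(J_{t_\nu})=(\nu+1)/(\nu+3)$, plugs these into (\ref{eq:2}) to get (iv), and obtains (v) by multiplying by the $6/\pi$ bound of Proposition~\ref{bounds}(i), exactly as you do. Your additional remarks --- the reduction of the supremum to a one-variable optimisation in $w=\mathrm{IB}_\nu(1-2u)$ and the observation that $\kappa^+_{J_{t_\nu}}=+\infty$ for $\nu>1$, which explains the restriction $\nu\le 1$ --- are accurate and in fact make explicit a step the paper only asserts.
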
  
\noindent Inequality (iv) is sharp, the bound being achieved, in the
limit, under very heavy tails (stable densities with $\alpha\downarrow
0$, or Student-$t_\mu$ densities with $\mu\downarrow 0$). Since this
is also the case, under the same sequences of densities,  for
inequality (i) in Proposition~2.1, inequality (v) is sharp as well.
  The upper bounds (iv) and (v) are both decreasing
  functions of the tail index $\nu$; both are unbounded at the 
  origin, and both   converge to the corresponding Cauchy values as 
  $\nu\mapsto 1$.

\subsection{The serial case} \label{sercase}

Until the early eighties, and despite  some forerunning  time-series applications such as Wald and Wolfowitz~\cite{WW}  (published as early as 1943---two years before Frank Wilcoxon's pathbreaking 1945  paper \cite{FW}!),  rank-based methods had been  essentially limited to statistical models involving
univariate independent observations. Therefore,  the traditional ARE bounds (Hodges and 
Lehmann \cite{HL56,HL61},  Chernoff-Savage~\cite{CS58} or Gastwirth \cite{G70}), as well as the classical monographs (H\'ajek and  \v{S}id\a'ak~\cite{HvS},  Randles and Wolfe~\cite{RW79}, Puri and Sen~\cite{PS85}, to quote only a few) 
mainly deal with univariate location and single-output linear (regression) 
models with independent observations. 
The situation  since then has changed, and 
rank-based procedures  nowadays have been proposed for a much broader class of statistical models, including time series problems, where serial dependencies    are  the main features under study. 

In this section, we focus on the linear rank statistics of the serial type involving two square-integrable score functions. Those statistics enjoy optimality properties  in the context of linear time series   (ARMA models; see \cite{HP94} for details).  Once adequately standardized, those statistics yield the so-called {\it rank-based autocorrelation coefficients}.   Denote  by $R\n_{1}, \ldots , R\n_{n}$ the ranks in  a triangular array~$X\n_{1}, \ldots , X\n_{n}$ of observations. {\it Rank  autocorrelations} (with lag $k$) are linear  serial
rank statistics  of the form 
$$
{\ut r}^{\!\!\!(n)}_{\!\!\! \raisebox{1.3ex}{$\scriptstyle{J_1J_2;k}$}}  :=
 \Big[(n-k)^{-1}\sum
_{t=k+1}^{n}J_1\Big(\frac{R\n_{t}}{n+1}\Big)J_2\Big(\frac{R\n_{t-k}}{n+1}\Big)
- m\n_{J_1J_2}\Big]  \big(s\n_{J_1J_2}\big)^{-1} ,
$$
where $J_1$ and $J_2$ are (square-integrable) score-generating  functions, whereas $m\n_{J_1J_2}$ and $s\n_{J_1J_2} :=
s\n_{J_1J_2;k}$ denote the exact mean of
$
J_1\Big(\frac{R\n_{t}}{n+1}\Big) J_2\Big(\frac{R\n_{t-k}}{n+1}\Big)$ and the exact
standard error  of   $(n-k)^{-\frac{1}{2}}\! \sum
_{t=k+1}^{n}\! J_1\Big(\frac{R\n_{t}}{n+1}\Big)J_2\Big(\frac{R\n_{t-k}}{n+1}\Big)\vspace{-0.7mm}$
under the assumption of i.i.d.\ $X\n_t$'s (more precisely, exchangeable $R_t\n$'s), respectively;  we refer to   pages~186 and 187 of \cite{HP94} for
explicit formulas. {\it Signed-rank autocorrelation coefficients} are
defined  similarly; see \cite{HP92} or \cite{HP94}.

Rank and  signed-rank autocorrelations are measures of serial dependence 
 offering  rank-based alternatives to the usual autocorrelation
coefficients, of the form 
$$r\n_{k}:=\sum _{t=k+1}^{n} X_tX_{t-k}/ \sum _{t=1}^{n} X_t^2,$$
which consitute the Gaussian reference benchmark in this context.  Of particular interest
are \vspace{-1mm}
\begin{enumerate}
\item[(i)] the {\it van der Waerden autocorrelations}  \cite{HP88}\vspace{-3mm}
$${\ut r}^{\!\!\!(n)}_{\!\!\! \raisebox{1.3ex}{$\scriptstyle{{\text{vdW}};k}$}}  :=
\Big[(n-k)^{-1}\sum_{t=k+1}^{n}\Phi ^{-1}\Big(\frac{R\n_{t}}{n+1}\Big)
\Phi ^{-1}\Big(\frac{R\n_{t-k}}{n+1}\Big) - m\n_{{\text{vdW}}}\Big]\! 
\big( s\n_{{\text{vdW}}}\big)^{-1}\!\!\! ,
$$\vspace{-3mm}
\item[(ii)] the {\it Wald-Wolfowitz} or {\it  Spearman autocorrelations} \cite{WW}\vspace{-3mm}
$${\ut r}^{\!\!\!(n)}_{\!\!\! \raisebox{1.3ex}{$\scriptstyle{{\text{SWW}};k}$}} := 
\Big[(n-k)^{-1}\sum_{t=k+1}^{n}R\n_{t}
R\n_{t-k}- m\n_{{\text{SWW}}}\Big]
\big(s\n_{{\text{SWW}}}\big)^{-1}, $$ \vspace{-2mm}
\item[(iii)] and the {\it Kendall autocorrelations}  \cite{FGH} (where explicit
values of $m\n_{{\text{K}}}$ and $s\n_{{\text{K}}}$ are provided)\vspace{-3mm}
$$
{\ut r}^{\!\!\!(n)}_{\!\!\! \raisebox{1.3ex}{$\scriptstyle{{\text{K}};k}$}} :=\Big[ 1 -
\frac{4D\n_k}{(n-k)(n-k-1)} -m\n_{{\text{K}}}\Big]\big(s\n_{{\text{K}}}\big)^{-1} \vspace{-3mm}
$$
with $D\n_k$ denoting the number of discordances at lag $k$, that is, the number of pairs 
$(R\n_t,R\n_{t-k})$ and $(R\n_s,R\n_{s-k})$ that satisfy either 
$$R\n_t<R\n_s\quad\text{and}\quad 
R\n_{t-k}>R\n_{s-k},\quad\!\text{or}\!\quad R\n_t>R\n_s\quad\!\text{and}\!\quad R\n_{t-k}<R\n_{s-k};$$ more specifically, 
  $
D\n_k :=\sum_{t=k+1}^n\sum_{s=t+1}^n I(R\n_t<R\n_s,\, R\n_{t-k}>R\n_{s-k}).$ 
 \vspace{-1mm}
\end{enumerate}
The van der Waerden autocorrelations are optimal---in the
sense that they allow for {\it locally optimal} rank tests in the case of ARMA models with normal innovation densities.
The Spearman and Kendall autocorrelations are serial versions of
Spearman's {\it rho} and Ken\-dall's {\it tau}, respectively, and are asymptotically equivalent under the null hypothesis of independence; although they are never optimal for any ARMA alternative,  they achieve excellent overall performance.
Signed rank autocorrelations are defined in a similar way.

 Let $J_i$, $i=1,\ldots , 4$ denote four square-summable score functions, and assume that they are monotone increasing, or the difference between two monotone increasing functions (that assumption  tacitly will be made in the sequel each time AREs are to be computed). Recall that  ${\cal F}_2$ denotes the subclass of densities~$f\in\cal F$ having finite moments of order two. The asymptotic relative efficiency, under innovation   density $f\in{\cal F}_2$, of the  rank-based tests $\phi^r_{J_1J_2}$  
 based on the autocorrelations${\ut r}^{\!\!\!(n)}_{\!\!\! \raisebox{1.3ex}{$\scriptstyle{J_1J_2;k}$}}\vspace{-2mm}$ with  respect to the  rank-based tests~$\phi^r_{J_3J_4}$ based on the autocorrelations${\ut r}^{\!\!\!(n)}_{\!\!\! \raisebox{1.3ex}{$\scriptstyle{J_3J_4;k}$}}\vspace{-2mm}$    is  
\begin{align}
& \text{ARE}^*_f(\phi^r_{J_1 J_2}/\phi^r_{J_3 J_4})  \nonumber \\
& =   
  \frac{ \mathcal{K}(J_3)}{\mathcal{K}(J_1)} \left(   \frac{\int_{0}^{1} J_1(v)
    \varphi_{f}(F^{-1}(v)){\rm d}v}{\int_{0}^{1} 
    J_3(v)\varphi_{f}(F^{-1}(v)) {\rm d}v}  \right)^2
     \frac{\mathcal{K}(J_4) }{\mathcal{K}(J_2)}
 \left(\frac{\int_0^1 
   J_2(v) F^{-1}(v) {\rm d}v}{\int_0^1 J_4(v) 
    F^{-1}(v) {\rm d}v} \right)^2 \nonumber \vspace{1mm} \\
& =\frac{ \mathcal{K}(J_3)}{\mathcal{K}(J_1)} C^2_f(J_1, J_3) \frac{ \mathcal{K}(J_4)}{\mathcal{K}(J_2)} \, D^2_f(J_2,J_4)   \label{eq:9}  \vspace{1mm}
\end{align}
with $C_f(J_1, J_3) := {\cal K}(J_1,f)/{\cal K}(J_3,f)$ and $D_f(J_2, J_4):= {\cal J}(J_2,f)/{\cal J}(J_4,f)$.

The $C_f$ ratios have been studied in Section~\ref{location},  and the same conclusions apply here; as for
the $D_f$  ratios, they can be treated by similar methods.

Denote by $\phi_{{\rm vdW}}^r$, $\phi_{{\rm W}}^r$, $\phi_{{\rm SWW}}^r$, $\ldots$ the tests
based on${\ut r}^{\!\!\!(n)}_{\!\!\!
  \raisebox{1.3ex}{$\scriptstyle{{\text{vdW}};k}$}}$,
  ${\ut
  r}^{\!\!\!(n)}_{\!\!\!
  \raisebox{1.3ex}{$\scriptstyle{{\text{W}};k}$}} $,
  ${\ut
  r}^{\!\!\!(n)}_{\!\!\!
  \raisebox{1.3ex}{$\scriptstyle{{\text{SWW}};k}$}} \vspace{-2mm} $,  etc. The serial counterpart of $\text{ARE}_f(\phi_{{\rm W}}/\phi_{J_1})$ is $\text{ARE}^*_f(\phi_{{\rm SWW}}^r/\phi_{J_1J_2}^r)$, for which the following result holds. 
  
 \begin{proposition} \label{serbounds}   Let the score functions $J_1$ and $J_2$ be monotone increasing, 
  skew-symmetric about $1/2$, and differentiable, with  strictly positive $J\pr_1(1/2)$ and~$J\pr_2(1/2)$.  
 Suppose that $f\in{\cal F}_2\bigcap{\cal F}_{J_1}\bigcap{\cal F}_{J_2}$ is a symmetric probability density 
function.  
 Then, if $J_1$ and $J_2$ are  \begin{itemize}
\item[(i)]  convex on 
$[1/2, 1)$,  
$$
\text{ARE}^*_f(\phi_{{\rm SWW}}^r/\phi_{J_1J_2}^r)
=
\text{ARE}^*_f(\phi_{{\rm K}}^r/\phi_{J_1J_2}^r) \leq
144\frac{\mathcal{K}(J_1) \mathcal{K}(J_2)}{(J_1\pr (1/2)\ J_2\pr (1/2))^2};
$$
\item[(ii)]  concave on $[1/2, 1)$,  
$$
\text{ARE}^*_f(\phi_{J_1 J_2}^r/\phi_{{\rm SWW}}^r)
=
\text{ARE}^*_f(\phi_{J_1 J_2}^r/\phi_{{\rm K}}^r) \leq  
\frac{1}{144}\frac{(J_1\pr (1/2)\ J_2\pr (1/2))^2}{\mathcal{K}(J_1) \mathcal{K}(J_2)}.
$$
\end{itemize}
\end{proposition}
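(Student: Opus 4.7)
The plan is to decompose the serial ARE formula (\ref{eq:9}) into its two natural factors and bound each using the nonserial techniques of Section~\ref{location}, with the Wilcoxon (Spearman--Wald--Wolfowitz) score $J_{\rm W}(u)=u-1/2$ appearing in both slots of the reference test. Specializing (\ref{eq:9}) to $J_3=J_4=J_{\rm W}$, and using $\mathcal{K}(J_{\rm W})=1/12$, gives
\begin{equation*}
\text{ARE}^*_f(\phi^r_{{\rm SWW}}/\phi^r_{J_1J_2})
=144\,\mathcal{K}(J_1)\,\mathcal{K}(J_2)\,C_f^{\,2}(J_{\rm W},J_1)\,D_f^{\,2}(J_{\rm W},J_2),
\end{equation*}
so part (i) reduces to proving the two factor bounds $C_f(J_{\rm W},J_1)\le 1/J_1\pr(1/2)$ and $D_f(J_{\rm W},J_2)\le 1/J_2\pr(1/2)$.

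For the $C_f$ factor I would replicate the integration-by-parts step leading to (\ref{AREf3})--(\ref{AREff3}): under $f\in{\cal F}_{J_1}$ one has $\mathcal{K}(J,f)=\int J\pr(F(x))f^2(x){\rm d}x$, and in particular $\mathcal{K}(J_{\rm W},f)=\int f^2(x){\rm d}x$ since $J_{\rm W}\pr\equiv 1$. Skew-symmetry of $J_1$ about $1/2$ implies that $J_1\pr$ is symmetric about $1/2$; combined with convexity of $J_1$ on $[1/2,1)$ this forces $J_1\pr(u)\ge J_1\pr(1/2)$ on all of $(0,1)$, whence $\mathcal{K}(J_1,f)\ge J_1\pr(1/2)\mathcal{K}(J_{\rm W},f)$ and the claimed bound on $C_f$ follows.

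The $D_f$ bound is, I expect, the main obstacle, because no analogous integration by parts is available for $\mathcal{J}(J,f)$; it has to be derived directly from a pointwise tangent inequality, and the argument works only once density symmetry has been used. Symmetry of $f$ makes $F^{-1}$ skew-symmetric about $1/2$ with $F^{-1}(u)\ge 0$ on $[1/2,1)$, and skew-symmetry of both $J_2$ and $J_{\rm W}$ makes the integrands $J_2(u)F^{-1}(u)$ and $(u-1/2)F^{-1}(u)$ symmetric about $1/2$, so that $\mathcal{J}(J_2,f)=2\int_{1/2}^1 J_2(u)F^{-1}(u){\rm d}u$ and similarly for $\mathcal{J}(J_{\rm W},f)$. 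Convexity of $J_2$ on $[1/2,1)$ together with $J_2(1/2)=0$ (from skew-symmetry) yields the tangent bound $J_2(u)\ge J_2\pr(1/2)(u-1/2)$ on $[1/2,1)$; multiplying by the nonnegative factor $F^{-1}(u)$ and integrating preserves the inequality, giving $\mathcal{J}(J_2,f)\ge J_2\pr(1/2)\mathcal{J}(J_{\rm W},f)$, that is, $D_f(J_{\rm W},J_2)\le 1/J_2\pr(1/2)$.

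Combining the two factor bounds proves (i) for the SWW version; the Kendall statement is then immediate from the asymptotic equivalence of SWW and Kendall autocorrelations under the independence null recalled in Section~\ref{sercase}, which transfers to local alternatives via Le~Cam's Third Lemma and hence produces identical Pitman AREs. Part (ii) is obtained by reversing all monotonicity and tangent inequalities: concavity of $J_1$ on $[1/2,1)$, together with the symmetry of $J_1\pr$, gives $J_1\pr(u)\le J_1\pr(1/2)$ on $(0,1)$, while concavity of $J_2$ with $J_2(1/2)=0$ gives $J_2(u)\le J_2\pr(1/2)(u-1/2)$ on $[1/2,1)$; the resulting lower bounds $C_f(J_{\rm W},J_1)\ge 1/J_1\pr(1/2)$ and $D_f(J_{\rm W},J_2)\ge 1/J_2\pr(1/2)$ translate, by taking reciprocals in the displayed identity above, into the announced upper bound on $\text{ARE}^*_f(\phi^r_{J_1J_2}/\phi^r_{{\rm SWW}})=\text{ARE}^*_f(\phi^r_{J_1J_2}/\phi^r_{{\rm K}})$.
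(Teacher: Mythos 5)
Your proposal is correct and follows essentially the same route as the paper: the paper likewise splits the serial ARE into the nonserial factor (bounded via the integration-by-parts inequality \eqref{eq:1}, i.e.\ the pointwise bound $J_1\pr(u)\ge J_1\pr(1/2)$) and the $D_f$ factor (bounded via the tangent inequality $J_2(u)\ge J_2\pr(1/2)(u-1/2)$ on $[1/2,1)$ together with the symmetry of $f$), with part (ii) obtained by reversing the inequalities. The only cosmetic difference is that you bound $C_f(J_{\rm W},J_1)$ directly rather than quoting the nonserial ARE bound $\text{ARE}_f(\phi_{\rm W}/\phi_{J_1})\le 12\,\mathcal{K}(J_1)/(J_1\pr(1/2))^2$, which amounts to the same computation.
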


\begin{proof} In view of (\ref{AREf}), we have
$$\text{ARE}^*_f(\phi_{{\rm SWW}}^r/\phi_{J_1J_2}^r)
=\text{ARE}_f(\phi_{{\rm W}}/\phi_{J_1})\frac{{\cal K}(J_2)}{{\cal K}(J_W)}\left(\frac{\int_0^1(v-1/2)F^{-1}(v){\rm d}v}{\int_0^1J_2(v)F^{-1}(v){\rm d}v}\right)^2.
$$
Consider part (i) of the proposition. It follows from \eqref{eq:1} 
 that 
$$\text{ARE}_f(\phi_{{\rm W}}/\phi_{J_1})\leq  {12\, {\cal K}(J_1)}/{(J\pr_1(1/2))^2}.
$$
Since $J_2$ is convex over $[1/2, 1)$,   $J_2(u) \ge J_2\pr (1/2)(u-1/2)$ for
  all~$u \in [1/2, 1)$, so that  
$$\int_{0}^1 J_2(v) 
    F^{-1}(v) {\rm d}v =
  2  \int_{1/2}^1 J_2(v) 
    F^{-1}(v) {\rm d}v \ge J_2\pr (1/2)\int_{1/2}^1 (v-1/2) 
    F^{-1}(v) {\rm d}v .$$
It follows that 
$$\frac{{\cal K}(J_2)}{{\cal K}(J_W)}\left(\frac{\int_0^1(v-1/2)F^{-1}(v){\rm d}v}{\int_0^1J_2(v)F^{-1}(v){\rm d}v}\right)^2
\leq \frac{12\, {\cal K}(J_2)}{(J\pr_2(1/2))^2},
$$
where the assumption of finite variance is used. Part (i) of the
result follows.   A similar argument  holds (with 
reversed inequalities) if~$J_2$ is concave, yielding part~(ii).  \end{proof}

 Applying this result 
to  the  score functions  $J_1(u) =
  J_2(u) = \Phi^{-1}(u)$ (convex over $[1/2,0)$)  for which $J_1\pr (1/2) = J_2\pr (1/2) =
  \sqrt{2\pi}$ and $\mathcal{K}(J_1) = \mathcal{K}(J_2) = 1$,  we readily 
  obtain the following serial 
  extension of Hodges and  Lehmann's  ``$6/\pi$ result":
  \begin{equation}
  \label{eq:15}
\text{ARE}^*_f(\phi_{\rm SWW}^r/\phi_{\rm vdW}^r)
=  \text{ARE}^*_f(\phi_{\rm K}^r/\phi_{\rm vdW}^r)
   \le (6/\pi)^2.
\end{equation}

An important difference, though, is that  the bound in  \eqref{eq:15}  is unlikely  to be sharp.  Section~\ref{sec:numer-cons}  provides some  numerical evidence of that fact, which is hardly surprising: 
  while the ratio $C_f(J_{\rm vdW}, J_{\rm W})$ is maximized for
  densities putting all their weight 
about the origin, this no longer holds true for $D_f(J_{\rm vdW},
J_{\rm W})$.  
 In particular, the sequences of densities considered  in \cite{HL61} or 
\cite{G70} along which $C_f(J_{\rm vdW},
J_{\rm W})$ tends to its upper bound  typically are not the same as those along which  
$D_f(J_{\rm vdW},
J_{\rm W})$ does. %

\setcounter{equation}{0}
\section{Some numerical results}
\label{sec:numer-cons}
In this final section, we provide numerical values of   $ARE_{f}(\phi_{\rm W}/\phi_{\rm vdW})$ (denoted as $ARE_f$ in the sequel) and $ARE^*_{f}(\phi^r_{\rm SWW}/\phi^r_{\rm vdW})$ (denoted as $ARE_f^*$ in the sequel) under various families of distributions.

 First, let us give some ARE values under  Gaussian densities: if $f=\phi$, we obtain 
 \begin{equation*}
    C_\phi(J_{\rm W},J_{\rm vdW})
     =
    D_\phi(J_{\rm W},J_{\rm vdW}) =  \frac{1}{2\sqrt{\pi}} \approx 0.28209
  \end{equation*}
so that 
\begin{equation*}
  ARE_{\phi}(\phi_{\rm W}/\phi_{\rm vdW}) = \frac{3}{\pi} \approx 0.95493
\end{equation*}
and 
\begin{equation*}
  ARE^*_{\phi}(\phi^r_{\rm SWW}/\phi^r_{\rm vdW}) = \frac{9}{\pi^2}
  \approx 0.91189.
\end{equation*}

\begin{table}
\begin{center}
    \begin{tabular}{|c|cccc|}
\hline
$\epsilon$ & $C_f$ & $D_f$ & $ARE_f$ & $ARE_f^*$  \\
\hline
\hline
0 & .398942 & .282070 &   1.90986 &  1.82346 \\
.2 & .396313 & .276619 & 1.88476  &  1.73062 \\
.4 &  .388772 &  .271848 &  1.81372 & 1.60844  \\
.6 & .377291 & .271061 &  1.70818  & 1.50608 \\
1 &  .348213 &   .287973 & 1.45503 &  1.44796 \\
 2 &.294160 & .303085 & 1.03836 & 1.14461 \\
3& .282852 & .285646 & .960064 &  .940023 \\
10 & .282095 &   .282095 & .954930 & .911891 \\
 100 &   .282095 &  .282095 & .954930 & .911891 \\
\hline
    \end{tabular}\vspace{-1mm}
    \caption{\label{hodges}Numerical values of $C_f$,  $D_f$,
      $ARE_f=ARE_{f}(\phi_{\rm W}/\phi_{\rm vdW})$ and $ARE_f^*=ARE^*_{f}(\phi^r_{\rm SWW}/\phi^r_{\rm vdW})$ under  densities   $f_{a,\epsilon}$  in the Hodges-Lehmann family ${\cal F}_{\rm HL} $ ( see (\ref{eq:2d})), for
      various values of  $\epsilon$ and $a \to 0$.} 
  \end{center}
  \end{table}
  \begin{table}
  \label{stud}
  \begin{center}
    \begin{tabular}{|c|cccc|}
\hline
$\nu$ & $C_f$ & $D_f$ & $ARE_f$ & $ARE_f^*$  \\
\hline
\hline
0.1 &  .394451 & -- &   1.86710 &  -- \\
1 & .343120 & -- & 1.41277  &  -- \\
2 &  .321212 &  .243196 &  1.23813 & .878736  \\
4 & .304695 & .269173 &  1.11407   &.968623  \\
6 & .297953 &    .274541 & 1.06531 &  .963551 \\
8 &  .294303 &   .276784 & 1.03937 &   .955507 \\
 10 & .292017 & .278005 &1.02329 & .949042 \\
100& .283146 & .281737 & .962059 &  .916370 \\
\hline
   \end{tabular}\vspace{-1mm}
    \caption{\label{stud} Numerical values of $C_f$,  $D_f$, $ARE_f=ARE_{f}(\phi_{\rm W}/\phi_{\rm vdW})$
      and $ARE_f^*=ARE^*_{f}(\phi^r_{\rm SWW}/\phi^r_{\rm vdW})$ under  Student-$t$ densities with various 
      degrees of freedom $\nu$.} 
  \end{center}
  \end{table}
\begin{table}
\label{poexp}
\begin{center}
    \begin{tabular}{|c|cccc|}
\hline
$\alpha$ & $C_f$ & $D_f$ & $ARE_f$ & $ARE_f^*$  \\
\hline
\hline
0.1 & .393903 & .175222 & 1.86191  &  0.685991 \\
1 & .313329 & .2720600 & 1.1781  &  1.046388 \\
2 & .282095 & .2820950  &  .954930 & .911893   \\
10 & .222095 & .2934363  & .591916    &  .611600\\
100 & .168549 & .2953577   & .340904 & .356871  \\
\hline
    \end{tabular}\vspace{-1mm}
    \caption{\label{poexp} Numerical values of $C_f$,  $D_f$, $ARE_f=ARE_{f}(\phi_{\rm W}/\phi_{\rm vdW})$
      and $ARE_f^*=ARE^*_{f}(\phi^r_{\rm SWW}/\phi^r_{\rm vdW})$ under  power exponential densities for 
     various values of the shape parameters $\alpha$.} 
  \end{center}
  \end{table}

Tables 1-3   provide numerical values of $ARE_f$ and  $ARE_f^*$ under 
\begin{enumerate}
\item[(i)] (Table 1) the two-parameter  family ${\cal F}_{\rm HL} $ of densities  $f_{a,\epsilon}$ associated with the distribution functions 
\begin{equation}
  \label{eq:2d}
  F_{a,\epsilon}(x) = \left\{
    \begin{array}{cl}
      \Phi (x)& \mbox{ if } \ 0 \le x \le \epsilon\\
       \Phi (\epsilon  + a(x-\epsilon))& \mbox{ if }\ \epsilon < x
    \end{array}
\right.
\end{equation}
where   $F_{a, \epsilon}(x)$ is
defined by symmetry for $x \le 0$ (this family of distributions, which has been used by Hodges and Lehmann  \cite{HL61}, is such that the nonserial $6/\pi $ bound is 
  achieved, in the limit,    
as both $a$ and $\epsilon$ go to zero),
\item[(ii)] (Table 2) the   family ${\cal F}_{\rm Student} $ of Student densities with degrees of freedom $\nu >0$, and
\item[(iii)]  (Table 3) the family ${\cal F}_{e}$ of power-exponential densities, of the form 
  \begin{equation}
    \label{eq:powexp}
    f_{\alpha}(x)  := \frac{e^{-|x|^{\alpha}}}{2\Gamma(1+1/\alpha)}\qquad x\in\mathbb{R},\ \ \alpha >0.
  \end{equation}
\end{enumerate}

\begin{figure}[htbp]
\begin{center}
\includegraphics[height=118mm, width=\linewidth]
{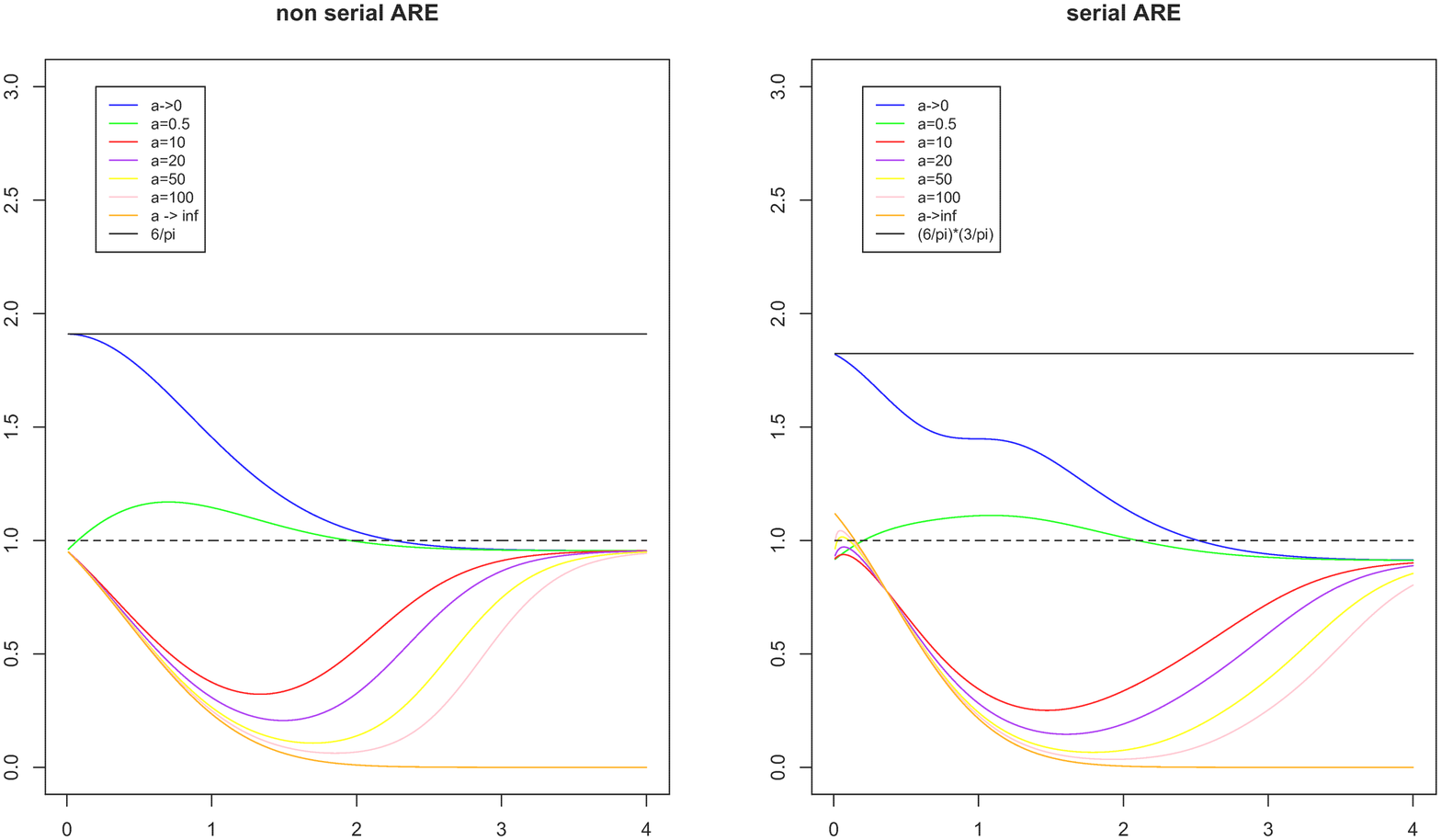}
\caption{\small Nonserial ${\rm \text{ARE}}_f=ARE_{f}(\phi_{\rm W}/\phi_{\rm vdW})$ (left plot) and serial  ${\rm
    \text{ARE}}^\star_f=ARE^*_{f}(\phi^r_{\rm SWW}/\phi^r_{\rm vdW})$ (right plot) 
  under  densities   $f_{a,\epsilon}$  in the Hodges-Lehmann family ${\cal F}_{\rm HL} $ ( see (\ref{eq:2d})),  as a function of $\epsilon \in [0, 4]]$, for various choices of  the parameter $a$.
} 
\label{HLplot}
\end{center}
\end{figure}

\begin{figure}[htbp]
\begin{center}
\includegraphics[height=118mm, width=\linewidth]
{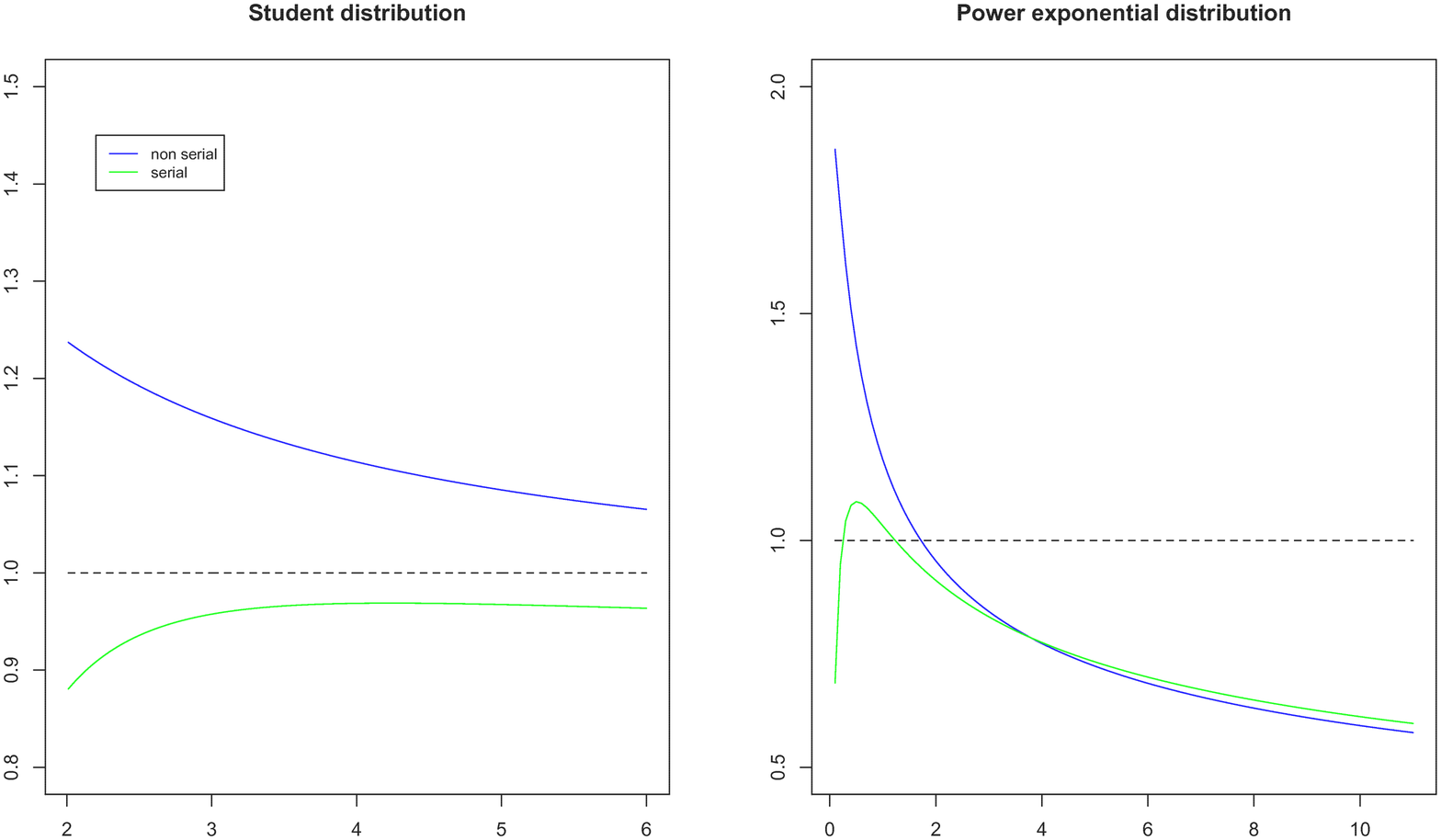}
\caption{\small Left plot : ${\rm \text{ARE}}_{f_{\nu}}(\phi_{\rm W}/\phi_{\rm vdW})$ and ${\rm
    \text{ARE}}^{\star}_{f_{\nu}}(\phi^r_{\rm SWW}/\phi^r_{\rm vdW})$ for $f_{\nu}$ the Student
  distribution,  as a function of the degrees of
  freedom $\nu \in [2, 6]$. Right plot : ${\rm \text{ARE}}_{f_{\alpha}}$ and ${\rm
    \text{ARE}}^{\star}_{f_{\alpha}}$ for  the power
  exponential   densities $f_{\alpha}$ \eqref{eq:powexp}, as a function of the
  shape parameter $\alpha \in [0, 11]$.
}
\label{SPEXplot}
\end{center}
\end{figure}

All tables seem to confirm the same findings : both the serial and
the non-serial AREs are monotone in the size of the tails, with the
non-serial ARE$_f$ attaining its maximal value ($6/\pi \approx
1.90986$) under heavy-tailed $f$ 
densities, while the maximal value for the serial ARE$_f^*$ lies
somewhere around $(6/\pi)(3/\pi )\approx 1.82346$. Inspection of Table
\ref{hodges} reveals that, although the limit of $C_f$ as $a\to 0$ is monotone in the
parameter $\epsilon$, the  ratio $D_f$ is not; from Table~3, the highest values
of $D_f$  under the distribution \eqref{eq:2d}  are attained for $a \to
\infty$ and $\epsilon\approx 0$. 

Under Student densities $f=f_{t_\nu}$, the nonserial ARE$_f$ is decreasing with $\nu$, taking value 1.41277 at the Cauchy ($\nu =1$), value one about $\nu = 15.42$ (a value of~$\nu$ that is not shown in the figure; Wilcoxon is thus outperforming van der Waerden up to $\nu =15$ degrees of freedom, with van der Waerden taking over from~$\nu =16$ on), and tending to the Gaussian value $0.95493$ as $\nu\to\infty$;  the serial ARE$_f^*$ is undefined for~$\nu\leq 2$, increasing for small values of $\nu$, from an infimum of 0.878736 (obtained as $\nu\downarrow 2$) up to a maximum of  0.968852 (reached about $\nu =Ê 4.24$), then slowly decreasing to the Gaussian value 0.911891 as $\nu\to\infty$. Sperman-Wald-Wolfowitz and Kendall  thus never outperform van der Waerden autocorrelations under Student densities.

Under the double exponential densities $f=f_\alpha$,   the nonserial ARE$_f$ is decreasing with $\alpha$, with a supremum of $6/\pi$ (the Hodges-Lehmann bound, obtained as $\alpha\downarrow 0$), and reaches value one about $\alpha = 1.7206$  (similar local asymptotic performances of Wilcoxon and van der Waerden, thus, occur at power-exponentials with parameter $\alpha = 1.7206$);   the serial ARE$_f^*$ is quite bad as $\alpha\downarrow 0$, then rapidly  increasing for small values of $\alpha$, with a maximum of 1.08552 about $\alpha = 0.510$, then deteriorating again as $\alpha\to\infty$; for $\alpha $ larger than 3, the serial and nonserial AREs roughly coincide.

\section*{Acknowledgments}
 This note originates in  a research visit  by the last two
 authors to the Department of Operations Research and Financial
 Engineering (ORFE) at  Princeton University in the Fall of 2012;
 ORFE 's support and hospitality is gratefully acknowledged.   
  Marc Hallin's research is supported by the Sonderforschungsbereich
 ``Statistical modelling of nonlinear dynamic processes" (SFB~823) of
 the Deutsche Forschungsgemeinschaft, a Discovery Grant of the Australian Research Council, and the IAP research network grant~P7/06 of the Belgian government (Belgian Science
Policy).  We gratefully acknowledge the pertinent comments by an anonymous  referee on the original version of the manuscript, which lead to substantial  improvements.

\bibliographystyle{plain}

\end{document}